\newtheorem{theorem}{Theorem}
\newtheorem{lemma}{Lemma}
\newtheorem{definition}{Definition}
\title{An Approach to Studying Quasiconformal Mappings on Generalized Grushin Planes}
\author{Colleen Ackermann}
\address{University of Illinois at Urbana-Champaign Department of Mathematics, 1409 W. Green St, Urbana, IL 61801, USA}
\email{ackrmnn2@illinois.edu}
\thanks{The author acknowledges support from NSF Grant DMS 08-38434 "EMSW21-MCTP: Research
Experiences for Graduate Students". }
\date{\today}
\begin{document}

\maketitle
{
\begin{abstract} We demonstrate that the complex plane and a class of generalized Grushin planes $G_r$, where $r$ is a function satisfying specific requirements, are quasisymmetrically equivalent.  Then using conjugation we are able to develop an analytic definition of quasisymmetry for homeomorphisms on $G_r$ spaces.  In the last section we show our analytic definition of quasisymmetry is consistent with earlier notions of conformal mappings on the Grushin plane.  This leads to several characterizations of conformal mappings on the generalized Grushin planes.
\end{abstract}

\noindent 2010 Mathematics Subject Classification.  Primary: 30L10; Secondary: 53C17.
\\ Keywords. Grushin plane, quasiconformal, quasisymmetric, Beltrami equation, conformal.

%%%%%%%%%%%%%%%%%%%% Introduction

\section{Introduction}
The concept of a quasiconformal mapping in the complex plane was originally formulated by Gr\"{o}tzsch in 1928 \cite{MR0200442}.  Intuitively, a quasiconformal mapping is a homeomorphism that maps infinitesimal circles to infinitesimal ellipses.  More precisely we have the following definition:
\begin{definition}\label{MetricDefn}
 Let $f$ be a homeomorphism between two domains in $\mathbb{C}$.
 Define
 $$ L_f(z,R)=\sup_{|z-z'|=R} |f(z)-f(z')|,$$

 $$ l_f(z,R)=\inf_{|z-z'|=R} |f(z)-f(z')|$$
 and $$H_f(z,R)=\frac{L_f(z,R)}{l_f(z,R)}.$$
 Then $f$ is quasiconformal provided that $$\limsup_{R\to 0} H_f(z,R)$$ is uniformly bounded for all $z$ in its domain.
 \end{definition}

As the theory was developed in the complex plane many equivalent definitions of quasiconformality arose.  One of these which will appear in Section \ref{AnalyticDefnSection} is the analytic definition.

\begin{definition}\label{CAnalyticDefn}
A homeomorphism $f:A\to B$, where $A$ and $B$ are domains in $\mathbb{C}$, is absolutely continuous on lines if for every rectangle $R=\{(x,y): a < x < b, c < y < d\}$, $\overline{R}\subset A$, $f$ is absolutely continuous on a.e. interval $I_x=\{(x,y): c<y<d\}$ and a.e. interval $I_y=\{(x,y):a<x<b\}$.  
Suppose $f$ is absolutely continuous on lines and satisfies 
\begin{equation}
 \frac{\partial f}{\partial \bar{z}}=\mu \frac{\partial f}{\partial z} \text{ a.e.}
 \end{equation}
 where $\mu$ is some measurable function with $||\mu||_{\infty}<1$.  Then $f$ is quasiconformal, and we say $f$ satisfies the (classical) Beltrami equation.
\end{definition}

Yet another condition called quasisymmetry is equivalent to quasiconformality in some cases, but not others.

\begin{definition}
Let $A$ and $B$ be domains in $\mathbb{C}$ and $f: A \to B$ a homeomorphism.  We say $f$ is quasisymmetric if there exists a homeomorphism $\eta : [0,\infty)\to [0,\infty)$ such that for all triples of points $a,b,c\in A$ we have
 $$|a-b|\leq t |b-c| \implies |f(a)-f(b)|\leq \eta(t) |f(b)-f( c)|.$$
\end{definition}

Quasisymmetric maps are always quasiconformal, but quasiconformal maps are not always quasisymmetric.  However, the two concepts are equivalent for homeomorphisms of the complex plane.  We will make considerable use of this fact in Section \ref{AnalyticDefnSection}.  
\\ \indent Quasisymmetric maps are generally easier to work with.  For example, it is simple to prove the composition of two quasisymmetric maps is quasisymmetric, and inverses of quasisymmetric maps are quasisymmetric.  However, it is far from trivial to see from Definition \ref{MetricDefn} that the set of quasiconformal maps of the plane is closed under composition and inversion.
\\ \indent
The first definition we gave of quasiconformality and our definition of quasisymmetry make sense on general metric spaces so naturally the theory did not remain only in the complex plane.  First the theory was expanded to $\mathbb{R}^n$ \cite{MR0454009}. 
Then later it was extended to the Heisenberg group \cite{MR788413}, general Carnot groups \cite{MR0385004} \cite{MR1260606} and finally equiregular sub-Riemannian manifolds \cite{MR1334873}, and Ahlfors regular metric measure spaces \cite{MR1654771}.  However, the theory has been largely unexplored for metric spaces that are non-Ahlfors regular.
The simplest example of such a space is the Grushin plane.
\\
\begin{definition} 
The (classical) Grushin plane $G$ is $\mathbb{R}^2$ with the metric defined by the Carnot-Carath\'{e}odory distance  
$$d_{CC}(w,w')=\inf\ell(\gamma)$$
where the infimum is taken over all absolutely continuous, horizontal paths $\gamma=(\gamma_1,\gamma_2):[0,1]\to G$ with $\gamma(0)=w$ and $\gamma(1)=w'$, and the length
$$\ell(\gamma)=\ell(\gamma_1,\gamma_2)=\int_0^1\sqrt{(\gamma_1'(s))^2+\frac{(\gamma_2'(s))^2}{(\gamma_1(s))^2}}ds.$$
Paths on the Grushin plane are horizontal if they have a horizontal tangent at the vertical axis.
\end{definition}
	Throughout this paper we take $(u,v)$ to be the coordinates on the Grushin plane.
	The Grushin plane is Riemannian everywhere except on the singular line $u=0$.  The metric for the Grushin plane is defined using the vector fields $\frac{\partial}{\partial u}$ and $|u|\frac{\partial}{\partial v}$ which span the entire tangent space except along the vertical axis which is sub-Riemannian by Chow's condition \cite{MR1421822}.  In Section 2 we will see easily computable estimates for the Carnot-Carath\'{e}odory distance, and discuss the non-Ahlfors regularity of the Grushin plane.
\\ \indent	William Meyerson showed the complex plane and the Grushin plane are quasisymmetrically equivalent via the map $(u,v)\to u|u|+iv$.  He then generalized this result and showed metric spaces defined by the vector fields $\frac{\partial}{\partial u}$ and $|u|^{\alpha}\frac{\partial}{\partial v}$ where $\alpha>0$, are quasisymmetrically equivalent to the complex plane \cite{Meyerson}.  In this paper we determine conditions on a homeomorphism $r:\mathbb{R}\to\mathbb{R}$ such that $\Phi_r:(u,v)\to r(u)+iv$ is a quasisymmetry between the complex plane and the metric space $G_r$ defined by the vector fields $\frac{\partial}{\partial u}$ and $r'(u)\frac{\partial}{\partial v}$.  These quasisymmetries are of interest to us, because they can be used to translate the rich theory of quasiconformal mappings in the complex plane to the $G_r$ spaces via conjugation.  For example, we can define the $r$-Grushin Beltrami equation as follows:

\begin{definition}\label{rGrushinBeltramiEquation} Suppose $g=(g_1,g_2):G_r\to G_r$ and define $\tilde{g}=\Phi_r\circ g$, $W=\frac{1}{2}(\frac{\partial}{\partial u}-ir'(u)\frac{\partial}{\partial v})$ and $\overline{W}=\frac{1}{2}(\frac{\partial}{\partial u}+ir'(u)\frac{\partial}{\partial v})$.  We say $\tilde{g}$ satisfies the $r$-Grushin Beltrami equation provided that
 \begin{equation}\label{rGrushinBeltrami}
  \overline{W} \tilde{g}=\nu W \tilde{g} \text{ a.e.}
  \end{equation}
 where $\nu$ is some measurable function with $||\nu||_{\infty}<1$.
 \end{definition}
 
 Then we obtain an analytic characterization of quasisymmetry in $G_r$.
 
 \begin{theorem}\label{QuasisymAnalyticDefn} A map $g: G_r\to G_r$ is quasisymmetric if and only if $\tilde{g}$ is a homeomorphism that is absolutely continuous on lines, and satisfies equation \ref{rGrushinBeltrami} for all points at which it is defined.  
 \end{theorem}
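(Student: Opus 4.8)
The plan is to reduce the statement to the corresponding equivalence in the complex plane by conjugating with the quasisymmetry $\Phi_r$. First I would set $f=\tilde g\circ\Phi_r^{-1}=\Phi_r\circ g\circ\Phi_r^{-1}:\mathbb{C}\to\mathbb{C}$. Since $\Phi_r$ and $\Phi_r^{-1}$ are quasisymmetric and quasisymmetry is preserved under composition, $g$ is quasisymmetric if and only if $f$ is; and because quasisymmetry and quasiconformality coincide for homeomorphisms of $\mathbb{C}$, this holds if and only if $f$ is quasiconformal. By the classical analytic description (Definition \ref{CAnalyticDefn}), $f$ is quasiconformal exactly when it is a homeomorphism that is absolutely continuous on lines and satisfies $f_{\bar z}=\mu f_z$ a.e.\ with $\|\mu\|_\infty<1$. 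It therefore suffices to show that these three properties of $f$ are equivalent, respectively, to $\tilde g$ being a homeomorphism, $\tilde g$ being absolutely continuous on lines, and $\tilde g$ satisfying the $r$-Grushin Beltrami equation \ref{rGrushinBeltrami}.

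The homeomorphism statement is immediate: $\Phi_r^{-1}$ is a homeomorphism, so $f=\tilde g\circ\Phi_r^{-1}$ is a homeomorphism precisely when $\tilde g$ is. For the Beltrami equations I would compute how $W$ and $\overline W$ transform under $\Phi_r$. Writing $z=x+iy=\Phi_r(u,v)$, so that $x=r(u)$ and $y=v$, the chain rule gives $\partial_u=r'(u)\partial_x$ and $\partial_v=\partial_y$, whence
\begin{equation}
W\tilde g = r'(u)\,f_z\qquad\text{and}\qquad \overline W\tilde g = r'(u)\,f_{\bar z},
\end{equation}
with the right-hand sides evaluated at $z=\Phi_r(u,v)$. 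Away from the singular line $u=0$ (a Lebesgue-null set, and exactly the locus where these operators may fail to be defined) we have $r'(u)\neq 0$, so dividing shows that $\overline W\tilde g=\nu\,W\tilde g$ is equivalent to $f_{\bar z}=\mu\,f_z$ with $\mu=\nu\circ\Phi_r^{-1}$. Since the requirements on $r$ force $\Phi_r$ and its inverse to send null sets to null sets, the essential suprema agree, $\|\mu\|_\infty=\|\nu\|_\infty$, and the dilatation bounds match.

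The remaining and most delicate point is the transfer of absolute continuity on lines. Because $\Phi_r$ fixes the $v=y$ coordinate and maps the line $u=u_0$ to the line $x=r(u_0)$, vertical lines correspond to vertical lines with the same parametrization, so absolute continuity of $\tilde g$ along $v\mapsto\tilde g(u_0,v)$ is literally absolute continuity of $f$ along $y\mapsto f(r(u_0)+iy)$. On horizontal lines, however, $u\mapsto\tilde g(u,v_0)=f(r(u)+iv_0)$ is the restriction of $f$ reparametrized by $r$, so I must show that $f$ is absolutely continuous on the horizontal line if and only if its precomposition with $r$ is. This is where the hypotheses on $r$ enter: $r$ is a monotone homeomorphism of $\mathbb{R}$ and, under the conditions guaranteeing that $\Phi_r$ is a quasisymmetry, both $r$ and $r^{-1}$ are locally absolutely continuous. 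Invoking the standard fact that the composition of an absolutely continuous function with a monotone absolutely continuous function is again absolutely continuous, applied in both directions, yields the equivalence, and combining the three equivalences completes the argument. The hard part will be precisely this horizontal-line step, both because it relies essentially on the regularity imposed on $r$ and because one must treat the exceptional behavior near $u=0$, where $r'$ degenerates, with care.
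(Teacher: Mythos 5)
Your proposal is correct and follows essentially the same route as the paper: conjugate by $\Phi_r$, use the coincidence of quasisymmetry and quasiconformality for planar homeomorphisms together with the classical analytic definition, and transfer the Beltrami equation via the chain-rule identities $W\tilde g=r'(u)\,f_z$ and $\overline W\tilde g=r'(u)\,f_{\bar z}$, which is exactly the content of Lemma \ref{BeltramiCoefs}. The one step you flag as delicate---transferring absolute continuity along horizontal lines through the reparametrization $x=r(u)$---is rendered tautological in the paper because its definition of absolute continuity on lines for $\tilde g$ measures subintervals by $|\Phi(w_i')-\Phi(w_i)|$ rather than by Euclidean length, so the argument you sketch via local absolute continuity of $r$ and $r^{-1}$ (while valid) is not needed there.
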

 We will clarify what we mean for $\tilde{g}$ to be absolutely continuous on lines in Section \ref{AnalyticDefnSection}.
\\ \indent  In the last section we will seek to reconcile this theorem with notions of conformal mappings.  For example, we will generalize the definition of conformality on Riemannian manifolds to develop the following definition:
  
  \begin{definition} Suppose $A$ and $B$ are domains in $G$ and $g=(g_1,g_2): A \to B$ is a homeomorphism.  Define $A'=A-\{u=0 \text{ or } g_1(u)=0\}$.  We say $g|_{A'}$ is conformal provided that 

$$D g=\left( \begin{array} {rr}
 \frac{\partial g_1}{\partial u} & |u|\frac{\partial g_1}{\partial v} \\
 \frac{1}{|g_1|}\frac{\partial g_2}{\partial u} & \frac{|u|}{|g_1|}\frac{\partial g_2}{\partial v}
 \end{array} \right)$$
is defined and is a conformal matrix for every point in $A'$.  We say $g$ is conformal on all of $A$ if $g$ is conformal on $A'$ and for all points $w_0\in A-A'$, $$\lim_{w\to w_0} D_rg(w)$$ is defined and non-zero.  We take the limit along all paths in $A'$.
\end{definition}

 We will show with certain conditions this definition is equivalent to $g$ being quasisymmetric and $\nu$ being identically zero.  Furthermore, our definition is satisfied by a class of conformal maps on the Grushin plane discovered by Payne \cite{MR2230585}.
 
\indent  We hope our study of conformal mappings on the Grushin plane will help us to develop a geometric definition of quasiconformal mappings (\cite{MR0200442} p. 21).  If this is possible, we plan to compare the geometric definition with the analytic and metric definitions, and determine when and in what ways they are equivalent.  We would like to replicate as much as possible of the theory of quasiconformal mappings in the complex plane.  Eventually we hope this work will provide insights into the theory of quasiconformal mappings on other non-equiregular spaces. 

\section*{Acknowledgements}

The author would like to thank Jeremy Tyson and the referee for their many helpful comments and suggestions.

%%%%%%%%%%%%%%%%%% Basic Geometry of the G Spaces

 \section{Basic Geometry of the $G_r$ Spaces}

Before proving quasisymmetry we must develop a basic picture of the geometry of the $G_r$ spaces.  To the best of our knowledge the following definition is original.

\begin{definition}\label{GeneralizedGrushin}
Let $r:\mathbb{R}\to\mathbb{R}$ be a differentiable homeomorphism satisfying the following properties:
\\ 1. $r'$ is an even function and $r'|_{[0,\infty)}$ is a homeomorphism onto itself.
\\ 2.  There exists $\beta>1$ such that for all $u\in\mathbb{R}-\{0\}$
$$\frac{r(u)}{u} \leq r'(u)\leq \beta \frac {r(u)}{u}.$$
The $r$-Grushin plane $G_r$ is $\mathbb{R}^2$ with the metric defined by the Carnot-Carath\'{e}odory distance  
$$d_{rCC}(w,w')=\inf\ell_r(\gamma)$$
where the infimum is taken over all absolutely continuous, horizontal paths $\gamma=(\gamma_1,\gamma_2):[0,1]\to G_r$ with $\gamma(0)=w$ and $\gamma(1)=w'$, and the length
$$\ell_r(\gamma)=\ell_r(\gamma_1,\gamma_2)=\int_0^1\sqrt{(\gamma_1'(s))^2+\frac{(\gamma_2'(s))^2}{(r'(\gamma_1(s)))^2}}ds.$$
\end{definition}
Just as on the classical Grushin plane, paths on $G_r$ are horizontal if they have a horizontal tangent at the vertical axis.
\\ 
\indent The simplest example of homeomorphisms $r$ satisfying Definition \ref{GeneralizedGrushin} are the power functions used by Meyerson which were mentioned in the introduction.  Another slightly more complex class of examples are functions of the form
$$ r(u)=\left\{
\begin{array}{lr}
u^p\ln(u+1) & u\geq 0 \\
-|u|^p\ln(|u|+1) &  u<0
\end{array}
\right.
$$
where $p>1$.
The reader can easily check that these satisfy the requirements of our definition.
\\
\indent The following lemma gives another property of $r$ and will be used throughout our proof of quasisymmetry.

 \begin{lemma}\label{doubling}
As defined above the function $r'$ is doubling when restricted to $[0,\infty)$.  In other words, there exists a constant $m>0$ such that for all $u\in[0,\infty)$ we have $r'(2u)\leq mr'(u)$.
\end{lemma}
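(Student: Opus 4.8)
The plan is to convert the two-sided estimate in condition~2 into a bound on the logarithmic derivative of $r$, integrate that bound to obtain a doubling (and reverse-doubling) estimate for $r$ itself, and then feed this back through condition~2 to recover the doubling estimate for $r'$.

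First I would record some elementary consequences of Definition~\ref{GeneralizedGrushin}. Since $r'|_{[0,\infty)}$ is a homeomorphism onto itself it fixes $0$ and is positive on $(0,\infty)$, so $r$ is strictly increasing on $[0,\infty)$. Evaluating condition~2 as $u\to 0^+$ forces $r(0)=0$, since otherwise $r(u)/u$ blows up while $r'(u)\to r'(0)=0$; hence $r(u)>0$ for $u>0$. The case $u=0$ of the claimed inequality is trivial because $r'(0)=0$, so it suffices to treat $u>0$. For such $u$ I would divide condition~2 through by $r(u)>0$ to obtain
$$\frac{1}{u}\;\le\;\frac{r'(u)}{r(u)}\;=\;\bigl(\ln r(u)\bigr)'\;\le\;\frac{\beta}{u}.$$

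Next I would integrate this chain of inequalities over the interval $[u,2u]$. The middle term integrates to $\ln r(2u)-\ln r(u)$, while $\int_u^{2u} t^{-1}\,dt=\ln 2$ and $\int_u^{2u}\beta\, t^{-1}\,dt=\beta\ln 2$. Exponentiating yields the doubling and reverse-doubling bounds
$$2\;\le\;\frac{r(2u)}{r(u)}\;\le\;2^{\beta}$$
valid for all $u>0$. Only the upper bound is needed below, but the lower one comes for free.

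Finally I would combine these estimates: applying the upper half of condition~2 at the point $2u$, then the doubling bound for $r$, and then the lower half of condition~2 at $u$, I obtain
$$r'(2u)\;\le\;\beta\,\frac{r(2u)}{2u}\;\le\;\beta\,\frac{2^{\beta}r(u)}{2u}\;=\;\beta\,2^{\beta-1}\,\frac{r(u)}{u}\;\le\;\beta\,2^{\beta-1}\,r'(u),$$
so the lemma holds with $m=\beta\,2^{\beta-1}$. I do not expect a serious obstacle here: the only step requiring genuine care is the justification that $r(u)>0$ on $(0,\infty)$, so that $\ln r$ is a legitimate antiderivative and the Fundamental Theorem of Calculus applies to the integration in the second step. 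The one useful idea is recognizing condition~2 as the statement that $(\ln r(u))'$ is pinched between the logarithmic derivatives $(\ln u)'=1/u$ and $(\ln u^{\beta})'=\beta/u$, which is exactly what forces both $r$ and $r'$ to be doubling.
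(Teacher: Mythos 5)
Your proof is correct, and it shares the paper's overall strategy: first establish that $r$ itself is doubling on $(0,\infty)$, then transfer that to $r'$ via the two-sided bound $\frac{r(u)}{u}\le r'(u)\le \beta\frac{r(u)}{u}$ (your final chain of inequalities is essentially the paper's $\frac{2u}{\beta}r'(2u)\le r(2u)\le m r(u)\le m u\, r'(u)$). Where you genuinely differ is in how the doubling of $r$ is obtained. The paper writes $r(\alpha u)-r(u)=\int_u^{\alpha u}r'(t)\,dt\le \beta r(\alpha u)\ln\alpha$, which is only useful when $\beta\ln\alpha<1$, so it must first fix a small $\alpha>1$ and then iterate to reach the factor $2$, yielding an implicit doubling constant. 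You instead read condition~2 as a two-sided bound on the logarithmic derivative $(\ln r)'$ and integrate it directly over $[u,2u]$, obtaining the explicit estimate $2\le r(2u)/r(u)\le 2^{\beta}$ in one step, with no smallness restriction, no iteration, a reverse-doubling lower bound for free, and the explicit constant $m=\beta\,2^{\beta-1}$. You are also more careful than the paper about the preliminary facts that $r(0)=0$ and $r>0$ on $(0,\infty)$, which are needed for $\ln r$ to make sense; your derivation of these from condition~2 is valid, and the appeal to the Fundamental Theorem of Calculus is justified since $r'$ is continuous by condition~1. Both arguments are sound; yours is slightly sharper and more self-contained.
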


\begin{proof} First we show $r|_{(0,\infty)}$ is doubling.  Choose $\alpha>1$ such that $\beta\ln\alpha<1$.  By our conditions on $r$ we have
$r(\alpha u)=\int_u^{\alpha u} r'(t)dt+r(u)
\leq \int_u^{\alpha u} \beta \frac {r(t)}{t}dt + r(u)
\leq \beta r(\alpha u)\int_u^{\alpha u}\frac{dt}{t}+r(u)
= \beta r(\alpha u)\ln\alpha +r(u).$
  Thus $r(\alpha u)\leq \frac{r(u)}{1-\beta\ln\alpha}$ where $\frac{1}{1-\beta\ln\alpha}>0$.  Since $\alpha>1$ and $r|_{(0,\infty)}$ is increasing, repeated iteration gives $r|_{(0,\infty)}$ is doubling for some constant $m$.  Then since
  $$ \frac{2 u}{\beta}r'(2u)  \leq r(2u)\leq mr(u)\leq mu r'(u),$$
  $r'$ restricted to $(0,\infty)$ is also doubling.  The claim is trivial for $u=0$.
\end{proof}

Since the Carnot-Carath\'{e}odory distance does not lend itself to proving quasisymmetry directly we will define a quasidistance $d_r$, and then show it suffices to only consider the quasidistance.  More precisely, we will show there exists a constant $C$ such that if $w,a,b\in G_r$ and $d_{rCC}(w,a)\leq d_{rCC}(w,b)$, then $d_r(w,a)\leq C d_r(w,b)$.   The definition below is a generalization of Meyerson's quasidistance \cite{Meyerson}.
\\ 
\begin{definition}
The $r$-Grushin quasidistance between two points $w,w'\in G_r$ is
$$d_r(w,w')=\max\left\{|u-u'|,\min\left\{M,\frac{|v-v'|}{\max\{r'(u),r'(u')\}}\right\}\right\}$$
where $M=M(v,v')$ is the unique solution to the equation $M=\frac{|v-v'|}{r'(M)}$.  If $u=u'=0$, and hence $\frac{|v-v'|}{\max\{r'(u),r'(u')\}}$ is undefined, we adopt the convention $d_r(w,w')=M$.
\end{definition}

From now on we simplify our notation by writing $\ell$ for $\ell_r$, $d(w,w')$ for $d_r(w,w')$, $d_{CC}(w,w')$ for $d_{rCC}(w,w')$, $\Phi$ for $\Phi_r$, and $G$ for $G_r$.  Most of what follows is true for all $r$-Grushin planes.  We will clearly state when this is not the case and a result or example applies only to the classical Grushin plane where $r(u)=\frac{1}{2}u|u|$.
\\
\indent The next lemma demonstrates that the Carnot-Carath\'{e}odory metric and the quasidistance on the $r$-Grushin plane are comparable.

 \begin{lemma}\label{Quasidistance}
There exists a positive constant $C$ such that for any two points $w,w'\in G$ 
$$ \frac{1}{C}d_{CC}(w,w')\leq d(w,w')\leq Cd_{CC}(w,w').$$
 \end{lemma}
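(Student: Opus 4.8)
The plan is to establish the two inequalities separately. Write $w=(u,v)$ and $w'=(u',v')$. Since $d_{CC}$ is defined as an infimum of lengths, the bound $d_{CC}\le C\,d$ amounts to exhibiting, for each pair of points, a horizontal path whose length is at most a constant multiple of $d(w,w')$, while the reverse bound $d\le C\,d_{CC}$ amounts to showing that \emph{every} horizontal path joining $w$ to $w'$ has length at least $\tfrac1C\,d(w,w')$. Throughout I would use that $r'$ is even and increasing on $[0,\infty)$, so that $\max\{r'(u),r'(u')\}=r'(u_{\max})$ where $u_{\max}=\max\{|u|,|u'|\}$, together with the doubling property from Lemma~\ref{doubling}.

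For the bound $d_{CC}\le C\,d$ I would split into two regimes according to which term realizes the inner minimum in the definition of $d$. If $|v-v'|/r'(u_{\max})\le M$, I connect $w$ to $w'$ by moving in $v$ at the larger of the two heights (cost $|v-v'|/r'(u_{\max})$) and moving in $u$ (cost $|u-u'|$), for a total length at most $2\,d(w,w')$. If instead $|v-v'|/r'(u_{\max})>M$, then $r'(u_{\max})<r'(M)$ forces $|u|,|u'|<M$; I route the path up to height $M$, travel in $v$ there at cost exactly $M$ (by the defining equation $M=|v-v'|/r'(M)$), and descend to $u'$. Since $|u-u'|<2M$, this path has length bounded by a fixed multiple of $M$, which equals $\max\{|u-u'|,M\}=d(w,w')$ up to a factor of $2$. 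In both regimes one must check that the $u$-segments may cross the singular line $u=0$ without penalty, which holds because $\gamma_2$ is constant on those segments, so the tangent is horizontal there.

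For the reverse bound $d\le C\,d_{CC}$, let $\gamma=(\gamma_1,\gamma_2)$ be any horizontal path from $w$ to $w'$ and let $U^\ast=\max_s|\gamma_1(s)|$ be its maximal height. Two elementary estimates drive everything: projecting onto the first coordinate gives $\ell(\gamma)\ge\int_0^1|\gamma_1'|\ge\max\{|u-u'|,\,U^\ast-\min\{|u|,|u'|\}\}$, while projecting onto the second gives $\ell(\gamma)\ge\int_0^1|\gamma_2'|/r'(\gamma_1)\ge|v-v'|/r'(U^\ast)$. The first already controls the $|u-u'|$ term of $d$. For the vertical term I would split on the size of $U^\ast$: if $U^\ast\le 2u_{\max}$, doubling gives $r'(U^\ast)\le m\,r'(u_{\max})$, hence $\ell(\gamma)\ge\tfrac1m|v-v'|/r'(u_{\max})$; if $U^\ast>2u_{\max}$ then $\ell(\gamma)\ge U^\ast-\min\{|u|,|u'|\}\gtrsim U^\ast$, and comparing $U^\ast$ with $M$ via $|v-v'|/r'(M)=M$ shows $\ell(\gamma)$ is bounded below by a multiple of $\min\{M,\,|v-v'|/r'(u_{\max})\}$. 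Combining the two bounds yields $\ell(\gamma)\ge\tfrac1C\,d(w,w')$ for every $\gamma$, so $d_{CC}\ge\tfrac1C\,d$.

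The main obstacle is this reverse bound, specifically the case analysis on $U^\ast$ and the way the threshold $M$ mediates between the two expressions in the inner minimum: a path that economizes on vertical cost by climbing to large $|u|$ pays for it in horizontal length, and the break-even height is exactly $M$. Making this trade-off quantitative is where Lemma~\ref{doubling} is indispensable, since without control on the growth of $r'$ the comparison of $r'(U^\ast)$ with $r'(u_{\max})$ would fail. The remaining technical points are the behavior of the constructed paths as they cross the singular line, the verification of horizontality there, and the degenerate case $u=u'=0$, which is covered by the stated convention $d(w,w')=M$.
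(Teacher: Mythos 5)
Your proposal is correct and follows essentially the same route as the paper: the upper bound for $d_{CC}$ uses the identical two-case path construction (travel in $v$ at height $u_{\max}$ when $|v-v'|/r'(u_{\max})\le M$, otherwise detour through height $M$), and the lower bound controls $r'(\gamma_1)$ by the maximal horizontal excursion of the path together with Lemma~\ref{doubling} and the defining equation $M=|v-v'|/r'(M)$. The paper organizes the lower bound slightly differently (disposing first of paths whose horizontal excursion already exceeds $d(w,w')$, then bounding $|\gamma_1|\le|u|+M$), but this is only a bookkeeping difference from your split on $U^\ast$ versus $2u_{\max}$.
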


\begin{proof}
 Let $w=(u,v)$ and $w'=(u',v')$ be points in $G$.  We make use of the following facts which the reader can easily verify:
\begin {enumerate}
\item $d_{CC}((u,v),(u',v))=|u-u'|$.
\item $d_{CC}((u,v),(u,v'))\leq\frac{|v-v'|}{r'(u)}$ provided $u\neq 0$.
\end{enumerate}
First we show $\frac{1}{C}d_{CC}(w,w')\leq d(w,w')$.  If $v=v'$, then $d(w,w')=|u-u'|=d_{CC}(w,w')$.  Hence we may assume $v\neq v'$.   
\\
\\
 {\bf Case 1: $M\geq\frac{|v-v'|}{\max\{r'(u),r'(u')\}}$}
\\  By our convention we can assume either $u$ or $u'$ is nonzero.  Without loss of generality we take $|u|\geq |u'|$ which gives us

\begin{eqnarray*}
d_{CC}(w,w'))
&\leq& d_{CC}((u,v),(u,v'))+d_{CC}((u,v'),(u',v')) \\
&\leq& |u-u'|+\frac{|v-v'|}{r'(u)} \\
&\leq&|u-u'|+\frac{|v-v'|}{\max\{r'(u),r'(u')\}} \\
&\leq& 2\max\left\{|u-u'|,\frac{|v-v'|}{\max\{r'(u),r'(u')\}}\right\} \\
&=& 2d(w,w').
\end{eqnarray*}
\\
\noindent {\bf Case 2: $M\leq\frac{|v-v'|}{\max\{r'(u), r'(u')\}}$}
\\ Then $\max\{r'(u),r'(u')\}\leq r'(M)$, and since $r'$ is even and $r'|_{[0,\infty )}$ is increasing, we have $\max\{|u|,|u'|\}\leq M$.  Thus 
\begin{eqnarray*}
d_{CC}(w,w') 
&\leq& d_{CC}(w,(M,v))+d_{CC}((M,v),(M,v'))+d_{CC}((M,v'),w')\\
&\leq& |M-u|+|M-u'|+\frac{|v-v'|}{r'(M)}\\
&\leq& 5M\\
&\leq& 5d(w,w'). 
\end{eqnarray*}
\\
 This proves $\frac{1}{C}d_{CC}(w,w')\leq d(w,w')$ for some constant $C$.
\\

 \indent To prove $d(w,w')\leq Cd_{CC}(w,w')$ it suffices to show for an arbitrary path $\gamma$ from $w$ to $w'$, $\ell(\gamma)\geq\frac{1}{2m}d(w,w')$.  Recall $m$ is the doubling constant defined in Lemma \ref{doubling}.  We once again assume $|u|\geq |u'|$.  Fix $\gamma=(\gamma_1,\gamma_2)$ and let $s_0$ be such that $|\gamma_1(s)-u|\leq|\gamma_1(s_0)-u|$ for all $s$.  If $|\gamma_1(s_0)-u|\geq d(w,w')$, then since $\ell(\gamma)\geq |\gamma_1(s_0)-u|$, we have our desired inequality.  Now we assume $|\gamma_1(s_0)-u|< d(w,w')$.  In other words

$$|\gamma_1(s_0)-u|<\max\left\{|u-u'|,\min\left\{M,\frac{|v-v'|}{\max\{r'(u),r'(u')\}}\right\}\right\}.$$

By the definition of $s_0$, $|\gamma_1(s_0)-u|\geq|u-u'|$ and thus $|\gamma_1(s_0)-u|<M$.  Also by the definition of $s_0$, $|\gamma_1(s)|\leq |u|+|\gamma_1(s_0)-u|$ for all $s$.  Combining our inequalities gives $|\gamma_1(s)|<M+|u|$ for all s.  Then

\begin{eqnarray*}
\ell(\gamma)
&=& \int_0^1\sqrt{(\gamma_1'(s))^2+\frac{(\gamma_2'(s))^2}{(r'(\gamma_1(s)))^2}}ds \\
&\geq& \int_0^1\sqrt{(\gamma_1'(s))^2+\frac{(\gamma_2'(s))^2}{(r'(|u|+M))^2}}ds \\
&\geq& \frac{1}{2}\left(|u-u'|+\frac{|v-v'|}{r'(|u|+M)}\right) \\
&\geq& \frac{1}{2}\left(|u-u'|+\frac{|v-v'|}{r'(2\max\{M,|u|\})}\right) \\
&\geq& \frac{1}{2}\left(|u-u'|+\frac{|v-v'|}{mr'(\max\{M,|u|\})}\right)   \text{ by Lemma \ref{doubling}}\\
&=& \frac{1}{2}\left(|u-u'|+\frac{1}{m}\min\left\{\frac{|v-v'|}{r'(M)},\frac{|v-v'|}{r'(u)}\right\}\right) \\
&\geq& \frac{1}{2m}d(w,w')  \text{  by the definition of $M$}. \\
\end{eqnarray*}
\end{proof}

Before ending this section we would like to return to another geometric feature of the Grushin plane which was briefly mentioned in the introduction.
\begin{definition}
Suppose a metric space $(X,d)$ has Hausdorff dimension $n$.  Let $m$ be the Hausdorff $n$-measure on $X$ and $B(x,R)$ be the ball of radius $R$ centered at $x$.  Then $X$ is called Ahlfors $n$-regular if there exists some constant $C$ such that 
 $$\frac{1}{C}R^n\leq m(B(x,R))\leq CR^n$$
 for all $x\in X$ and all $R>0$.
\end{definition}
The Grushin plane has Hausdorff dimension 2, and any compact subset of the Grushin plane that excludes the singular line is Ahlfors 2-regular.  However, once we include the singular line in our space this fails to be true.  Indeed, for small $\epsilon$, the number of balls of radius $\epsilon$ needed to cover a square with one side on the singular line has magnitude  $\asymp\epsilon^{-2}\ln(\epsilon^{-1})$ \cite{MR1421822}.
\\ \indent This complicates our study of quasiconformal mappings because it is difficult to determine if quasisymmetry and quasiconformality are equivalent.  Quasisymmetry is a global condition while quasiconformality is a local one.  Thus to prove quasiconformality implies quasisymmetry we need some global regularity condition on the geometry of our space.  We would like to have the equivalence of these two definitions, because it is easier to show a function satisfies the conditions for being quasiconformal, but it is usually simpler to prove theorems about quasisymmetric maps.  

%%%%%%%%%%%%%%%%% QS Equivalence of C and G

\section{The Quasisymmetric Equivalence of the Complex Plane and Generalized Grushin Planes}
We begin this section by showing that to prove the $G_r$ spaces and the complex plane are quasisymmetrically equivalent, it suffices to consider the quasidistance instead of the Carnot-Carath\'{e}odory metric on the generalized Grushin planes.  Then we will prove two key lemmas which show how the quasidistance between two points in $G_r$ compares to the distance between their images in the complex plane.  These will allow us to finally prove the desired quasisymmetry with relative ease.  

 \begin{lemma} \label{QuasidistanceSuffices} If $w,a,b\in G$ are such that $d_{CC}(w,a)\leq d_{CC}(w,b)$, then $d(w,a)\leq C^2d(w,b)$.
\end{lemma}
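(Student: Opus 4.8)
The plan is to reduce everything to the two-sided comparison already established in Lemma \ref{Quasidistance}. That lemma produces a single constant $C$, uniform over all pairs of points, satisfying
\begin{equation*}
\frac{1}{C}\,d_{CC}(w,w')\leq d(w,w')\leq C\,d_{CC}(w,w').
\end{equation*}
The crucial point is that the same $C$ works for every pair, so I may apply the comparison simultaneously to the pair $(w,a)$ and the pair $(w,b)$ without worrying about the constant changing between the two applications.

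First I would invoke the upper bound of Lemma \ref{Quasidistance} applied to $(w,a)$ to get $d(w,a)\leq C\,d_{CC}(w,a)$. Next I would use the hypothesis $d_{CC}(w,a)\leq d_{CC}(w,b)$ to replace $d_{CC}(w,a)$ by $d_{CC}(w,b)$. Finally I would invoke the lower bound of Lemma \ref{Quasidistance} applied to $(w,b)$, rewritten as $d_{CC}(w,b)\leq C\,d(w,b)$, to pass back to the quasidistance. Chaining these three steps yields
\begin{equation*}
d(w,a)\leq C\,d_{CC}(w,a)\leq C\,d_{CC}(w,b)\leq C\cdot C\,d(w,b)=C^2\,d(w,b),
\end{equation*}
which is exactly the claimed inequality.

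There is essentially no obstacle here; the entire content is the uniformity of the constant in Lemma \ref{Quasidistance}, which lets the Carnot--Carath\'eodory metric serve as an intermediary that translates an ordering of distances on $G$ into a comparison of quasidistances, at the cost of one factor of $C$ in each direction. The only point worth stating explicitly in the write-up is that the comparison is applied to two different pairs of points with the \emph{same} $C$, so that the product of the two factors is precisely $C^2$ rather than a product of two unrelated constants.
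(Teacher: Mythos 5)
Your proposal is correct and is exactly the argument the paper gives: apply the upper bound of Lemma \ref{Quasidistance} to $(w,a)$, use the hypothesis, then apply the lower bound to $(w,b)$, yielding the chain $d(w,a)\leq Cd_{CC}(w,a)\leq Cd_{CC}(w,b)\leq C^2d(w,b)$. Nothing further is needed.
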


\begin{proof}
 By our previous lemma, $d(w,a)\leq Cd_{CC}(w,a)\leq Cd_{CC}(w,b)\leq C^2 d(w,b)$. \end{proof}

Recall the map $\Phi:G\to\mathbb{C}$ by 
\begin{equation}\label{quasisymmetry}
\Phi(u,v)=r(u)+iv.
\end{equation}
 We will eventually show $\Phi$ is a quasisymmetry.  Throughout our proof we will use the sup norm on $\mathbb{C}$ so $|\Phi(w)-\Phi(w')|=|(r(u)-r(u'),v-v')|=\max\{|r(u)-r(u')|,|v-v'|\}$.  The following two lemmas describe how $d(w,w')$ compares to $|\Phi(w)-\Phi(w')|$.  Note the dependence on the relative magnitudes of $d(w,w')$ and the maximum distance of $w$ and $w'$ from the v-axis.  This is unsurprising since the amount by which the metric on the Grushin plane is distorted from the Euclidean metric depends on a comparison between the same two quantities.

\begin{lemma} \label{Max>Distance}
 Suppose $w,w'\in G$ and $\max\{|u|, |u'|\}\geq d(w,w')$.  Then for some constant $C_1$
			$$\frac{1}{C_1} |\Phi(w)-\Phi(w')|\leq d(w,w')\max\{r'(u),r'(u')\}\leq C_1|\Phi(w)-\Phi(w')|.$$
\end{lemma}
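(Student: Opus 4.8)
The plan is to compare the two coordinate directions separately, reducing the lemma to the two assertions that the horizontal part of $d$ scales like $|r(u)-r(u')|$ and that the vertical part scales like $|v-v'|$. First I would normalize. Since $r'$ is even, reflecting the configuration across the $v$-axis sends $u\mapsto -u$, $u'\mapsto -u'$ and leaves $d(w,w')$, the value $\max\{r'(u),r'(u')\}$, and $|\Phi(w)-\Phi(w')|=\max\{|r(u)-r(u')|,|v-v'|\}$ all unchanged (the last because $r(-u)=2r(0)-r(u)$, so differences of $r$-values are preserved). Hence I may assume $u\ge 0$, and since $r'|_{[0,\infty)}$ is increasing I may assume $|u|\ge|u'|$, giving $\max\{r'(u),r'(u')\}=r'(u)$. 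A short but essential preliminary observation: because $|u-u'|\le d(w,w')\le|u|$ by hypothesis, $u$ and $u'$ cannot lie on opposite sides of the $v$-axis, so in fact $0\le u'\le u$.

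Next I would handle the horizontal component and show $|u-u'|\,r'(u)$ is comparable to $|r(u)-r(u')|$. Writing $r(u)-r(u')=\int_{u'}^{u}r'(t)\,dt$ with $0\le u'\le u$, monotonicity of $r'$ immediately gives the easy bound $|r(u)-r(u')|\le r'(u)\,|u-u'|$. For the reverse inequality I would split into the cases $u'\ge u/2$ and $u'<u/2$: in the first case Lemma \ref{doubling} gives $r'(t)\ge r'(u)/m$ on $[u',u]$, and in the second case the bound $u-u'\le u$ together with $\int_{u/2}^{u}r'(t)\,dt\ge \tfrac{u}{2}\,r'(u)/m$ finishes the estimate. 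The outcome is $|r(u)-r(u')|\le r'(u)\,|u-u'|\le 2m\,|r(u)-r(u')|$.

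I would then treat the vertical component, where the hypothesis $\max\{|u|,|u'|\}\ge d(w,w')$ does the decisive work. I claim the minimum defining $d$ is always attained at $\frac{|v-v'|}{r'(u)}$, i.e. $\frac{|v-v'|}{r'(u)}\le M$. For if instead $\frac{|v-v'|}{r'(u)}>M$, then the minimum equals $M$, so $d(w,w')\ge M$ and the hypothesis forces $|u|\ge M$, whence $r'(u)\ge r'(M)$; but then $\frac{|v-v'|}{r'(u)}\le\frac{|v-v'|}{r'(M)}=M$ by the defining relation $M\,r'(M)=|v-v'|$, a contradiction. Consequently $\min\{M,\frac{|v-v'|}{r'(u)}\}\,r'(u)=|v-v'|$, and therefore $d(w,w')\,r'(u)=\max\{|u-u'|\,r'(u),\,|v-v'|\}$.

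Finally I would combine the estimates: since $|u-u'|\,r'(u)$ is comparable to $|r(u)-r(u')|$ with constants $1$ and $2m$, the quantity $\max\{|u-u'|\,r'(u),\,|v-v'|\}$ is comparable to $\max\{|r(u)-r(u')|,\,|v-v'|\}=|\Phi(w)-\Phi(w')|$ with the same constants, yielding the lemma with $C_1=2m$. I expect the main obstacle to be the reverse inequality for the horizontal component: when $u'$ is much smaller than $u$, the integrand $r'$ can be very small over part of $[u',u]$, and it is precisely the doubling property of $r'$ that prevents the integral from being too small relative to $r'(u)\,|u-u'|$. This is the reason Lemma \ref{doubling} was established first. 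By contrast, the vertical dichotomy is the conceptual heart of the lemma but is computationally brief once one sees that the hypothesis $\max\{|u|,|u'|\}\ge d(w,w')$ excludes the $M$-branch of the minimum.
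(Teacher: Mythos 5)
Your proposal is correct and follows essentially the same route as the paper: reduce to $0\le u'\le u$, show $|u-u'|\,r'(u)\asymp|r(u)-r(u')|$, and use the hypothesis $\max\{|u|,|u'|\}\ge d(w,w')$ to rule out the $M$-branch of the minimum, so that $d(w,w')\,r'(u)=\max\{|u-u'|\,r'(u),\,|v-v'|\}$. The only cosmetic difference is in the horizontal comparison, where the paper uses the Mean Value Theorem together with the defining inequality $r(u)/u\le r'(u)\le\beta r(u)/u$ (getting the constant $\beta$), while you use Lemma \ref{doubling} with a case split at $u/2$ (getting $2m$); both are valid.
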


\begin{proof}
Fix $w$, $w'$ such that $\max\{|u|, |u'|\}\geq d(w,w')$.  Then $\max\{|u|, |u'|\}\geq |u-u'|$, and thus $uu'\geq 0$.  By the Mean Value Theorem and our conditions on $r$, for some $c$ between $u$ and $u'$ we have $$|r(u)-r(u')|
=|u-u'|r'(c)
\leq |u-u'|\max\{r'(u),r'(u')\}
\leq |u-u'|\beta\max\left\{\frac{r(u)}{u},\frac{r(u')}{u'}\right\}
\leq\beta|r(u)-r(u')|.$$  The last inequality holds since our conditions on $r$ imply the function $\frac{r(u)}{u}$ is increasing.

  Also if $M\leq\frac{|v-v'|}{\max\{r'(u),r'(u')\}}$, then $r'(M)\leq r'(d(w,w'))\leq\max\{r'(u),r'(u')\}\leq r'(M)$ which implies $r'(M)= \max\{r'(u),r'(u')\}$ and thus $M=\frac{|v-v'|}{\max\{r'(u),r'(u')\}}$.  Therefore we may assume $M\geq\frac{|v-v'|}{\max\{r'(u),r'(u')\}}$ and the lemma follows. \end{proof}
 
\begin{lemma} \label{Max<Distance}
 Suppose $w,w'\in G$ and $\max\{|u|, |u'|\}\leq d(w,w')$.  Then for some constant $C_2$,
			$$ \frac{1}{C_2} |\Phi(w)-\Phi(w')|\leq r'(d(w,w'))d(w,w')\leq C_2|\Phi(w)-\Phi(w')|.$$	
\end{lemma}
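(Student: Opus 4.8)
The plan is to work entirely with the two coordinates of $\Phi(w)-\Phi(w')=(r(u)-r(u'),\,v-v')$ in the sup norm and with the explicit maximum defining $d:=d(w,w')$. Writing $u_*=\max\{|u|,|u'|\}$, evenness and monotonicity of $r'$ on $[0,\infty)$ give $\max\{r'(u),r'(u')\}=r'(u_*)$, and the hypothesis becomes simply $u_*\le d$. Throughout I would use that condition 2 of Definition \ref{GeneralizedGrushin} yields $r(d)\asymp r'(d)\,d$ (precisely $r(d)\le r'(d)d\le\beta r(d)$), that $r$ is odd and increasing with $r(0)=0$, and that $M$ is characterized by $M\,r'(M)=|v-v'|$.

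For the inequality $|\Phi(w)-\Phi(w')|\le C_2\,r'(d)d$ I expect no real difficulty. Since $|u|,|u'|\le d$ we get $|r(u)-r(u')|\le 2r(d)\le 2r'(d)d$, while a one-line inspection of the inner minimum defining $d$ shows $|v-v'|\le r'(d)d$ in every case. Taking the maximum gives this direction with $C_2=2$.

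The content lies in the reverse inequality $r'(d)d\le C_2|\Phi(w)-\Phi(w')|$, which I would split according to which term realizes $d=\max\{|u-u'|,\min\{M,|v-v'|/r'(u_*)\}\}$. If $d=|u-u'|$, then the triangle inequality gives $u_*\ge d/2$, so the doubling Lemma \ref{doubling} yields $r'(d)\le r'(2u_*)\le m\,r'(u_*)$; on the other hand $d=|u-u'|\ge u_*$ forces $|r(u)-r(u')|\ge r(u_*)$ (the two points straddle the axis, or one of them lies on it), and condition 2 gives $r(u_*)\ge u_*r'(u_*)/\beta$. Chaining these three estimates bounds $r'(d)d$ by a constant multiple of $|r(u)-r(u')|$.

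The hard part is the case $d=\min\{M,|v-v'|/r'(u_*)\}$. If the minimum is $M$, then $d=M$ and $|v-v'|=Mr'(M)=r'(d)d$ immediately. The delicate subcase is $d=|v-v'|/r'(u_*)\le M$, where at first glance $|v-v'|=r'(u_*)d$ looks \emph{smaller} than $r'(d)d$ because $u_*\le d$, which would defeat the bound. The resolution I would use is that $d\le M$ together with $Mr'(M)=|v-v'|=r'(u_*)d$ forces $r'(u_*)\ge r'(d)$; combined with $r'(u_*)\le r'(d)$ coming from $u_*\le d$, this pins $r'(u_*)=r'(d)$, so in fact $|v-v'|=r'(d)d$ after all. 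The degenerate case $u=u'=0$ is covered by the convention $d=M$, i.e.\ the first subcase here. Collecting constants, $C_2=2m\beta$ works.
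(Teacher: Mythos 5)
Your proposal is correct and follows essentially the same route as the paper: the same case split on which term of the maximum realizes $d(w,w')$, the same use of Lemma \ref{doubling} and condition 2 of Definition \ref{GeneralizedGrushin} when $d=|u-u'|$ (which forces $uu'\le 0$), and the same observation that $d\le M$ together with $u_*\le d$ pins down $|v-v'|=r'(d)d$ exactly in the remaining case. Your handling of the $d=|u-u'|$ case via $r'(d)d\le 2m\,u_*r'(u_*)\le 2m\beta\,r(u_*)\le 2m\beta|r(u)-r(u')|$ is a slightly more direct computation than the paper's four-term identity $m(r'(u)+r'(u'))|u-u'|=m|ur'(u)-u'r'(u')+ur'(u')-u'r'(u)|$, but it rests on the same ingredients and yields the same constant.
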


\begin{proof}
 Fix $w$, $w'$ such that $\max\{|u|, |u'|\}\leq d(w,w')$.  If $d(w,w')=\frac{|v-v'|}{\max\{r'(u),r'(u')\}}$, then $$r'(M)\leq \max\{r'(u),r'(u')\}\leq r'(d(w,w'))=r'\left(\frac{|v-v'|}{\max\{r'(u),r'(u')\}}\right)\leq r'(M)$$ which implies $M=\frac{|v-v'|}{\max\{r'(u),r'(u')\}}$.  Thus $d(w,w')=\max\{|u-u'|,M\}$.  
 \\ \indent We also have $$ \frac{1}{2}|r(u)-r(u')|\leq \max\{|r(u)|,|r(u')|\}\leq\max\{r'(u) |u|,r'(u')|u'|\}\leq  r'(d(w,w'))d(w,w').$$  Furthermore by our hypothesis, if $d(w,w')=|u-u'|$, we must have $uu'\leq0$ which implies 

\begin{eqnarray*}
r'(d(w,w'))d(w,w')
&=& r'(u-u')|u-u'| \\
&\leq& r'(2\max\{|u|,|u'|\})|u-u'| \\
&\leq& m\max\{r'(u),r'(u')\}|u-u'| \text{ by Lemma \ref{doubling}} \\
&\leq& m(r'(u)+r'(u'))|u-u'| \\
&=& m|ur'(u)-u'r'(u')+ur'(u')-u'r'(u)| \\
&\leq& 2m|ur'(u)-u'r'(u')| \\
&\leq& 2 m \beta |r(u)-r(u')| \\
\end{eqnarray*}
where the last inequality holds because $uu'\leq 0$.  The result follows. \end{proof}

Now we are able to show $\Phi$ is a quasisymmetry.  We actually only prove weak quasisymmetry, but this is equivalent to quasisymmetry for the spaces we are considering.  

\begin{definition}
Let $(X,d_X)$ and $(Y,d_Y)$ be metric spaces.  A homeomorphism $\Phi:X\to Y$ is weakly quasisymmetric if there exists a constant $C$ such that for all triples of points $a,b,c\in X$ we have
 $$d_X(a,b)\leq d_X(b,c) \implies d_Y(\Phi(a),\Phi(b))\leq C d_Y(\Phi(b),\Phi( c))$$
\end{definition}

For a proof of the equivalence of weak quasisymmetry and quasisymmetry the reader is referred to Theorem 10.15 in \cite{MR1800917}.
\\
\begin{theorem} Suppose $a$, $b$ and $w$ are points in the $r$-Grushin plane such that $d_{CC}(w,a)\leq d_{CC}(w,b)$.  Then for some constant $C( r)$ we have
			$$ |\Phi(w)-\Phi(a)|\leq C(r) |\Phi(w)-\Phi(b)|.$$

\end{theorem}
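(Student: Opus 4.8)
The plan is to strip the statement down to the quasidistance, fuse the two comparison lemmas into a single monotone quantity, and then transport that quantity along the doubling of $r'$. Write $w=(u_0,v_0)$, $a=(u_1,v_1)$, $b=(u_2,v_2)$. The first step is purely bookkeeping: by Lemma \ref{QuasidistanceSuffices}, the Carnot--Carath\'eodory hypothesis $d_{CC}(w,a)\le d_{CC}(w,b)$ upgrades to the quasidistance inequality $d(w,a)\le C^2 d(w,b)$. This is the only place the metric hypothesis is used, and from here on I work entirely with $d$ and with $\Phi$.

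The key observation is that Lemmas \ref{Max>Distance} and \ref{Max<Distance} merge into one formula. Since $r'$ is even and $r'|_{[0,\infty)}$ is increasing, $\max\{r'(u),r'(u')\}=r'(\max\{|u|,|u'|\})$; and in the regime $\max\{|u|,|u'|\}\le d(w,w')$ the controlling factor is instead $r'(d(w,w'))$. Both cases are captured simultaneously by
$$|\Phi(w)-\Phi(w')|\asymp d(w,w')\cdot r'\bigl(\max\{|u|,|u'|,d(w,w')\}\bigr),$$
with implied constant $\max\{C_1,C_2\}$, because the inner maximum automatically selects $\max\{|u|,|u'|\}$ in the first regime and $d(w,w')$ in the second. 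Applying this to the pairs $(w,a)$ and $(w,b)$, I set $P_a=\max\{|u_0|,|u_1|,d(w,a)\}$ and $P_b=\max\{|u_0|,|u_2|,d(w,b)\}$, so that $|\Phi(w)-\Phi(a)|\asymp d(w,a)\,r'(P_a)$ and $|\Phi(w)-\Phi(b)|\asymp d(w,b)\,r'(P_b)$.

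It then remains to dominate $P_a$ by $P_b$. Clearly $|u_0|\le P_b$, and $d(w,a)\le C^2 d(w,b)\le C^2 P_b$. For the last term, since $d(w,a)\ge|u_0-u_1|$ by definition of the quasidistance, I get $|u_1|\le|u_0|+|u_0-u_1|\le|u_0|+d(w,a)\le(1+C^2)P_b$, whence $P_a\le(1+C^2)P_b$. Choosing an integer $k$ with $2^k\ge 1+C^2$ and iterating Lemma \ref{doubling} gives $r'(P_a)\le r'(2^kP_b)\le m^k r'(P_b)$. Combining this with $d(w,a)\le C^2 d(w,b)$ and the merged comparison yields
$$|\Phi(w)-\Phi(a)|\lesssim d(w,a)\,r'(P_a)\lesssim C^2 m^k\, d(w,b)\,r'(P_b)\lesssim |\Phi(w)-\Phi(b)|,$$
with every constant depending only on $r$ through $C$, $C_1$, $C_2$, and $m$; collecting them gives the claimed $C(r)$.

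The main obstacle is the fusion step. The comparison between $|\Phi(w)-\Phi(w')|$ and the quasidistance genuinely changes form across the two regimes, and a direct case analysis on which of $(w,a)$ and $(w,b)$ lies in which regime would fracture the proof into several cross-cases, each requiring its own estimate. The whole argument becomes short precisely because all of these collapse into the single monotone expression $d(w,w')\,r'(\max\{|u|,|u'|,d(w,w')\})$, after which the only remaining $w'$-dependence is the $r'$-factor, and that is exactly the quantity the doubling property of Lemma \ref{doubling} is designed to control.
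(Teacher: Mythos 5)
Your proof is correct and follows essentially the same route as the paper's: reduce to the quasidistance via Lemma \ref{QuasidistanceSuffices}, compare $|\Phi(w)-\Phi(w')|$ to $d(w,w')$ times an appropriate value of $r'$ via Lemmas \ref{Max>Distance} and \ref{Max<Distance}, and absorb the factor $1+C^2$ using the doubling of $r'$. The only difference is cosmetic but pleasant: by noting that $\max\{r'(u),r'(u')\}=r'(\max\{|u|,|u'|\})$ and packaging both lemmas as $|\Phi(w)-\Phi(w')|\asymp d(w,w')\,r'(\max\{|u|,|u'|,d(w,w')\})$, you collapse the paper's four-case analysis into a single monotonicity argument.
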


\begin{proof}
 Fix $a,b,w\in G$ such that $d_{CC}(w,a)\leq d_{CC}(w,b)$.  Then $d(w,a)\leq C^2d(w,b)$ by Lemma \ref{QuasidistanceSuffices}.  We divide the proof into the following four cases:
\\
\\ {\bf Case 1: $\max\{|u|,|a_1|\}\leq d(w,a)$ and $\max\{|u|,|b_1|\}\leq d(w,b)$}
\\ By Lemma \ref{Max<Distance}, $$|\Phi(w)-\Phi(a)|\leq C_2r'(d(w,a))d(w,a)\leq C_2C^2r'(C^2d(w,b))d(w,b)\leq C'|\Phi(w)-\Phi(b)|$$ where $C'$ is such that $C_2^2C^2r'(C^2t)\leq C'r'(t)$.  Such a $C'$ can be found since $r$ is doubling.
\\
\\ {\bf Case 2: $\max\{|u|,|a_1|\}\leq d(w,a)$ and $\max\{|u|,|b_1|\}\geq d(w,b)$}
\\ This case is the same as Case 1 except one should use Lemma \ref{Max>Distance} instead of Lemma \ref{Max<Distance} at the end of the chain of inequalities. 
\\
\\
\indent The last two cases are slightly more complicated since first we must find ways to compare $\max\{|u|,|a_1|\}$ with $\max\{|b_1|,|u|\}$ and $d(w,b)$.  After these inequalities are obtained, the proofs follow similarly to those of the first two cases. 
\\
\\ {\bf Case 3: $\max\{|u|,|a_1|\}\geq d(w,a)$ and $\max\{|u|,|b_1|\}\geq d(w,b)$}
\\ Since $d(w,a)\leq C^2d(w,b)$, we have 
$|a_1-u|\leq C^2d(w,b)$ and therefore
$ |a_1|\leq |u|+C^2d(w,b).$
Then we can obtain our desired comparison: $$\max\{|u|,|a_1|\}\leq\max\{|u|,|b_1|\}+C^2d(w,b)\leq (1+C^2)\max\{|u|,|b_1|\}.$$
Finally we have
\begin{eqnarray*}
|\Phi(w)-\Phi(a)|
&\leq& C_1d(w,a)\max\{r'(u),r'(a_1)\} \\
&\leq& C_1C^2d(w,b)r'((1+C^2)\max\{|b_1|,|u|\}) \\
&\leq& C''|\Phi(w)-\Phi(b)| 
\end{eqnarray*}
where $C''$ is such that $C_1^2C^2r'((1+C^2)t)\leq C''r'(t)$.
\\
\\ {\bf Case 4: $\max\{|u|,|a_1|\}\geq d(w,a)$ and $\max\{|u|,|b_1|\}\leq d(w,b)$}
\\ Similarly to the previous case $d(w,a)\leq C^2d(w,b)$ implies $$\max\{|u|,|a_1|\}\leq\max\{|u|,|b_1|\}+C^2d(w,b)\leq (1+C^2)d(w,b).$$
Thus 
\begin{eqnarray*}
|\Phi(w)-\Phi(a)|
&\leq& C_1d(w,a)\max\{r'(u),r'(a_1)\}\\
&\leq& C_1C^2r'((1+C^2)d(w,b))d(w,b) \\
&\leq& C'''|\Phi(w)-\Phi(b)|
\end{eqnarray*}
where $C'''$ is such that $C_1C_2C^2r'((1+C^2)t)\leq C'''r'(t)$.
\end{proof}

Since we have shown that the $r$-Grushin plane is quasisymmetrically equivalent to $\mathbb{C}$, we may ask whether all of our restrictions on the homeomorphism $r$ were necessary.  The requirement that $r'$ is even can almost certainly be eliminated, since it is mostly used to simplify the proof when dealing with $w$ and $w'$ on opposite sides of the v-axis.  The following theorem demonstrates that the other major constraint on $r$ is a necessary condition.  

\begin{theorem} Let $r:\mathbb{R}\to\mathbb{R}$ be a differentiable homeomorphism such that $r'|_{[0,\infty)}$ and $r'|_{(-\infty,0]}$ are homeomorphisms onto $[0,\infty)$ and $(-\infty,0]$ respectively, $r(0)=0$, and $\Phi$, as defined in (\ref{quasisymmetry}) is quasisymmetric.
Then there exists $\beta>1$ such that for all $u\in\mathbb{R}-\{0\}$
$$  \frac{r(u)}{u}\leq r'(u)\leq \beta \frac{r(u)}{u}.$$  
\end{theorem}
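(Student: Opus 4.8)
The plan is to treat the two inequalities separately, since the left-hand inequality turns out to be essentially free from the structural hypotheses while the right-hand inequality is where quasisymmetry actually enters.

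For the lower bound $\frac{r(u)}{u}\le r'(u)$, I would first observe that the assumption that $r'|_{[0,\infty)}$ is a homeomorphism onto $[0,\infty)$ forces $r'$ to be strictly increasing there (a continuous bijection of $[0,\infty)$ that fixes $0$ and is unbounded cannot be decreasing), and in particular $r'(0)=0$. Hence for $u>0$ we may write $r(u)=\int_0^u r'(t)\,dt$, which is $u$ times the average of the increasing function $r'$ over $[0,u]$; this average is at most the endpoint value $r'(u)$, so $r(u)\le u\,r'(u)$, i.e. $\frac{r(u)}{u}\le r'(u)$. The case $u<0$ is symmetric, using the monotonicity of $r'|_{(-\infty,0]}$. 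Note that this step invokes only the homeomorphism hypotheses and $r(0)=0$, not quasisymmetry, which is why the constant here is exactly $1$.

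For the upper bound $r'(u)\le \beta\,\frac{r(u)}{u}$, I would fix $u>0$ and apply quasisymmetry to the test triple $b=(u,0)$, $c=(0,0)$, and $a=(u,\,u\,r'(u))$. The whole point of scaling the vertical coordinate of $a$ by exactly $u\,r'(u)$ is to force the vertical displacement to be comparable to the horizontal one: by fact (2) in the proof of Lemma~\ref{Quasidistance} we get $d_{CC}(b,a)\le \frac{u\,r'(u)}{r'(u)}=u$, while by fact (1) we have $d_{CC}(b,c)=u$, so that $d_{CC}(b,a)\le d_{CC}(b,c)$. Quasisymmetry then yields $|\Phi(a)-\Phi(b)|\le \eta(1)\,|\Phi(b)-\Phi(c)|$. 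Evaluating in the sup norm and using $r(0)=0$, one computes $|\Phi(a)-\Phi(b)|=u\,r'(u)$ and $|\Phi(b)-\Phi(c)|=r(u)$, whence $u\,r'(u)\le \eta(1)\,r(u)$, that is $r'(u)\le \eta(1)\,\frac{r(u)}{u}$. Setting $\beta=\max\{\eta(1),2\}$ (which is strictly greater than $1$; the degenerate value $\eta(1)=1$ would force $r$ to be linear, where any $\beta>1$ works) and running the symmetric argument for $u<0$ finishes the proof.

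I expect the only genuine subtlety to be the selection of the test triple, namely calibrating the vertical coordinate of $a$ so that the elementary distance estimates pin $d_{CC}(b,a)$ and $d_{CC}(b,c)$ into the correct relation; once that is arranged, evaluating $\Phi$ and invoking the definition of quasisymmetry is immediate. It is worth emphasizing that the argument uses quasisymmetry only through the single value $\eta(1)$, so even the weak form of the hypothesis would suffice.
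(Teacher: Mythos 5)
Your proof is correct, and the upper bound is obtained by a genuinely different device than the paper's. For the lower bound you and the paper do essentially the same thing (you phrase it as ``the average of the increasing function $r'$ over $[0,u]$ is at most $r'(u)$,'' the paper phrases it via the Mean Value Theorem on $[0,u]$; these are the same observation). For the upper bound the paper first applies the Mean Value Theorem on $[u,2u]$ to get $r'(u)\le\frac{r(2u)-r(u)}{u}$, and then applies weak quasisymmetry to the collinear horizontal triple $(2u,v),(u,v),(0,v)$, where $d_{CC}$ coincides with the Euclidean distance in the first coordinate, to conclude $r(2u)-r(u)\le\beta r(u)$. You instead skip the difference quotient entirely and apply quasisymmetry to the mixed triple $a=(u,ur'(u))$, $b=(u,0)$, $c=(0,0)$, calibrating the vertical offset so that the elementary estimate $d_{CC}((u,0),(u,h))\le h/r'(u)$ makes the two relevant distances both equal to $u$; this reads the value $r'(u)$ directly off the vertical metric coefficient and yields $ur'(u)\le\eta(1)\,r(u)$ in one step. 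Your route trades the paper's extra Mean Value Theorem step for the extra vertical distance estimate, and both land on the constant $\eta(1)$; neither is stronger, but yours makes more transparent exactly where the geometry of the vertical direction enters. Two small remarks: the parenthetical claim that $\eta(1)=1$ would force $r$ to be linear is unnecessary (and not obviously true); taking $\beta=\max\{\eta(1),2\}$ already works since $r(u)/u>0$. And the ``symmetric'' case $u<0$ deserves a moment of care because the theorem's stated hypothesis that $r'$ maps $(-\infty,0]$ onto $(-\infty,0]$ sits awkwardly with $r$ being an increasing homeomorphism --- but the paper's own proof waves at this case in exactly the same way, so this is an issue with the statement rather than with your argument.
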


\begin{proof}
 If $u$ is positive, by the Mean Value Theorem there exists $c\in(0,u)$ such that $r'( c)=\frac{r(u)}{u}$.  Then since $r'$ is a homeomorphism of $[0,\infty)$ and is therefore increasing on $[0,\infty)$, we have $r'(u)>r'(c )$.  Thus $r'(u)\geq \frac{r(u)}{u}$.  To achieve an upper bound we again use the Mean Value Theorem except this time on the interval $[u,2u]$.  This gives 
$$ r'(u)\leq \frac{r(2u)-r(u)}{u}\leq \beta\frac{r(u)-r(0)}{u}= \frac{\beta r(u)}{u}.$$
The second inequality holds since $\Phi$ is quasisymmetric, and as stated in the proof of Lemma \ref{Quasidistance}, we have $d((u,v),(u',v))=|u-u'|$.  Hence there exists some $\beta>1$ such that $r(2u)-r(u)\leq \beta (r(u)-r(0))$.  
\\
\indent The inequalities for negative $u$ are proved in a similar manner. \end{proof}

%%%%%%%%%%%%%% Analytic Defn of QS

\section{An Analytic Definition of Quasisymmetry}\label{AnalyticDefnSection}

In this section we will use conjugation by our quasisymmetry $\Phi$ to develop an analytic definition of quasisymmetry in the $r$-Grushin plane.
\\
 \indent For the next several results let $g=(g_1,g_2):G\to G$ be a homeomorphism.  We define $f=f_1+if_2:\mathbb{C}\to\mathbb{C}$ to be the conjugation of $g$ by $\Phi$.  In other words $f=\Phi\circ g\circ \Phi^{-1}$.  Let $U=\frac{\partial }{\partial u}$ and $V=r'(u)\frac{\partial}{\partial v}$ be the vector fields corresponding to our metric on the $r$-Grushin plane.  Recall in Definition \ref{rGrushinBeltramiEquation} we gave the notation $W=\frac{1}{2}(U-iV)$, $\overline{W}=\frac{1}{2}(U+iV)$ and $\tilde{g}=\Phi\circ g$.  
 \\ \indent The next lemma demonstrates a relationship between the classical Beltrami equation and the $r$-Grushin Beltrami equation both of which were defined in the introduction.  The following theorem is an analytic definition of quasisymmetry on the $r$-Grushin plane.

\begin{lemma}\label{BeltramiCoefs}  Suppose $f$ and $g$ have partial derivatives that exist almost everywhere.  Then $\tilde{g}$ satisfies the $r$-Grushin Beltrami equation if and only if $f$ satisfies the classical Beltrami equation.  The equations are stated in Definitions \ref{CAnalyticDefn} and \ref{rGrushinBeltramiEquation} respectively.
\end{lemma}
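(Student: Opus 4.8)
The plan is to relate the two Beltrami equations by expressing the relevant differential operators of the $r$-Grushin setting in terms of the classical complex derivatives of the conjugated map $f = \Phi \circ g \circ \Phi^{-1}$. Since $\tilde g = \Phi \circ g = f \circ \Phi$, the statement is fundamentally a chain-rule computation: I would first set up the correspondence between the coordinates $(u,v)$ on $G$ and the coordinates $(x,y)=(r(u),v)$ on $\mathbb C$, noting that $\Phi(u,v)=(r(u),v)$ means $x = r(u)$, $y=v$, and hence $\partial x/\partial u = r'(u)$, $\partial y/\partial v = 1$. The key observation driving everything is that the vector fields $U = \partial/\partial u$ and $V = r'(u)\,\partial/\partial v$ are precisely the pushforwards (or more naturally the pullbacks) of the Euclidean fields $\partial/\partial x$ and $\partial/\partial y$ under $\Phi$, up to the factor $r'(u)$.

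First I would compute how $W = \tfrac12(U - iV)$ and $\overline W = \tfrac12(U + iV)$ act on $\tilde g = f\circ\Phi$. By the chain rule, for any function $h$ of $(x,y)$ composed with $\Phi$, we have $U(h\circ\Phi) = r'(u)\,(\partial_x h)\circ\Phi$ and $V(h\circ\Phi) = r'(u)\,(\partial_y h)\circ\Phi$. Therefore $W\tilde g = \tfrac12 r'(u)\big((\partial_x f) - i(\partial_y f)\big)\circ\Phi = r'(u)\,(\partial_z f)\circ\Phi$ and similarly $\overline W \tilde g = r'(u)\,(\partial_{\bar z} f)\circ\Phi$, using the standard identities $\partial_z = \tfrac12(\partial_x - i\partial_y)$ and $\partial_{\bar z} = \tfrac12(\partial_x + i\partial_y)$. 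The common factor $r'(u)$ is the crux: it appears in both $W\tilde g$ and $\overline W\tilde g$ and so cancels when one forms the ratio. Consequently the $r$-Grushin Beltrami equation $\overline W \tilde g = \nu\, W\tilde g$ holds at a point $w$ if and only if $(\partial_{\bar z} f) = \nu\,(\partial_z f)$ holds at the image point $\Phi(w)$, which is exactly the classical Beltrami equation for $f$ with the same Beltrami coefficient $\nu = \mu$ transported by $\Phi$. This simultaneously proves the equivalence and identifies the coefficients.

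I would then verify the norm condition: since $\Phi$ is a bijection and the coefficient is literally carried over, $\|\nu\|_\infty < 1$ for $\tilde g$ if and only if $\|\mu\|_\infty < 1$ for $f$, so the quantitative hypotheses match up. One must also check the cancellation of $r'(u)$ is legitimate, i.e.\ that $r'(u)\neq 0$; this holds away from the singular line $u=0$, and since the equations are only asserted almost everywhere (and $\{u=0\}$ is a null set), the equivalence is unaffected on the relevant full-measure set.

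The main obstacle I anticipate is not the algebra but the careful justification of the chain rule under the stated hypothesis that the partials of $f$ and $g$ merely exist almost everywhere. The composition $f\circ\Phi$ need not be differentiable wherever $f$ is, so I would need to confirm that the chain-rule identities $U(f\circ\Phi) = r'(u)(\partial_x f)\circ\Phi$ etc.\ hold at almost every point, invoking that $\Phi$ is a diffeomorphism off the singular line (with $r$ differentiable and $r'>0$ there) so that it preserves sets of measure zero and the composite is differentiable a.e.\ exactly where $f$ is. Handling this measure-theoretic bookkeeping cleanly, rather than the purely formal computation, is where the real care lies.
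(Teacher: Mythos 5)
Your proposal is correct and follows essentially the same route as the paper: both arguments reduce to the chain-rule identities relating $U,V$ applied to the components of $\tilde g = f\circ\Phi$ to the partials $\partial_x f,\partial_y f$ at $\Phi(w)$ (you write them as $U(h\circ\Phi)=r'(u)(\partial_x h)\circ\Phi$, the paper writes the inverted form $\partial_x f_i|_{\Phi(w)}=\tfrac{1}{r'(u)}U(\cdot)|_w$), after which the common factor $r'(u)$ cancels in the ratio and gives $\mu\circ\Phi=\overline W\tilde g/W\tilde g=\nu$. Your added attention to where the chain rule is valid (off the null set $u=0$, where $\Phi$ is a diffeomorphism) is a reasonable refinement of the same argument rather than a different one.
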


\begin{proof}
 Wherever our derivatives exist we have by the chain rule:
\begin{enumerate}
\item $\frac{\partial f_1}{\partial x}|_{\Phi(w)}=\frac{1}{r'(u)}U(r(g_1))|_w$
\item $\frac{\partial f_1}{\partial y}|_{\Phi(w)}=\frac{1}{r'(u)}V(r(g_1))|_w$
\item $\frac{\partial f_2}{\partial x}|_{\Phi(w)}=\frac{1}{r'(u)}U(g_2)|_w$
\item $\frac{\partial f_2}{\partial y}|_{\Phi(w)}=\frac{1}{r'(u)}V(g_2)|_w$
\end{enumerate}

 and therefore $$\mu\circ\Phi
	=\frac{ \frac{\partial f}{\partial \bar{z}}}{\frac{\partial f}{\partial z}}
	=\frac{U(r(g_1))-V(g_2)+i(U(g_2)+V(r(g_1)))}{U(r(g_1))+V(g_2)+i(U(g_2)-V(r(g_1)))}
	=\frac{\overline{W}\tilde{g}}{W\tilde{g}} \text{ a.e..} $$ 
\end{proof}

We require a definition of absolute continuity on lines in the $r$-Grushin plane before giving our theorem.

\begin{definition}
Suppose $g$ is a homeomorphism of the $r$-Grushin plane.  We say $\tilde{g}$ is absolutely continuous on a horizontal interval $I_v=\{(u,v): a<u<b\}$ if for every $\epsilon>0$ there exists a $\delta>0$ such that whenever $\{[w_{i},w_{i}']\}_{1\leq i\leq n}$ is a disjoint collection of sub-intervals of $I_v$ $$\Sigma_{i=1}^n |\Phi(w_{i}')-\Phi(w_{i})|<\delta \implies \Sigma_{i=1}^n|\tilde{g}(w_{i}')-\tilde{g}(w_{i})|<\epsilon.$$
We define absolute continuity on vertical line segments analogously.

The function $\tilde{g}$ is absolutely continuous on lines if for every rectangle $R=\{(u,v): a < u < b, c < v < d\}$, 
$\tilde{g}$ is absolutely continuous on a.e. horizontal interval $I_v=\{(u,v):a<u<b\}$ and a.e. vertical interval $I_u=\{(u,v): c<v<d\}$ where almost every is with respect to Lebesgue measure.

\end{definition}

We have defined absolute continuity on lines in this manner so that $f$ is absolutely continuous on lines exactly when $\tilde{g}$ is absolutely continuous on lines.  

 We now prove Theorem \ref{QuasisymAnalyticDefn}.

 \begin{proof}
 Suppose $g$ is quasisymmetric.  Then since $\Phi$ is quasisymmetric, it follows that $f$ is quasisymmetric and hence quasiconformal.  So by the analytic definition of quasiconformality, the partial derivatives of $f$ exist a.e. and where they exist
 $$ \frac{\partial f}{\partial \bar{z}}=\mu\frac{\partial f}{\partial z}$$ for some measurable $\mu$ with $||\mu||_{\infty}<1$.  Furthermore $f$ is absolutely continuous on lines, which implies $\tilde{g}$ is absolutely continuous on lines.  Since each component of $\Phi$ and $\Phi^{-1}$ is differentiable except at the vertical axis, the partial derivatives of $g$ exist a.e.  Therefore $\overline{W}\tilde{g}$ and $W\tilde{g}$ exist a.e. and by our lemma $$ \overline{W} \tilde{g}=(\mu\circ\Phi) W \tilde{g}.$$  Since $||\mu||_{\infty}<1$ we have $ ||\nu||_{\infty}=||\mu \circ \Phi||_{\infty}<1$.
 \\ \indent Now suppose $\tilde{g}$ is absolutely continuous on lines and satisfies equation \ref{rGrushinBeltrami} for all points at which it is defined.  Then $f$ is absolutely continuous on lines and hence has partial derivatives that exist a.e..  As in our proof of the forwards implication, this implies that the partial derivatives of $g$ exist a.e., and $g$ satisfies the $r$-Grushin Beltrami equation.  Therefore $f$ satisfies the classical Beltrami equation and is thus quasiconformal.  Finally by conjugation, $g$ is quasisymmetric.
 \end{proof}

One would like to be able to replace quasisymmetry with quasiconformality in this theorem.  It is a well known result that quasisymmetry implies quasiconformality \cite{MR1800917}.  However, the converse does not always occur, and so far we have been unable to either prove or disprove it for the $r$-Grushin plane.  A partial answer to our question is in Theorem \ref{AnalyticConformalDef}, where we will show that on certain domains $\nu$ being identically zero implies $g$ is conformal.  The limitations on the domain arise when $g$ does not preserve the singular line.  We will discuss this following Theorem \ref{AnalyticConformalDef}.

%%%%%%%%%%%%%%% Conformal Mappings on the Grushin Plane

\section{Conformal Mappings on the $r$-Grushin Planes}\label{ConformalSection}

Since conformal mappings play a vital role in the study of quasiconformal mappings, it is of interest to us to find a useful characterization of them on the $r$-Grushin plane.  We will first develop a definition of conformality on the $r$-Grushin plane from the definition of conformal mappings on Riemannian manifolds.  This is appropriate since the $r$-Grushin plane is Riemannian everywhere except on the singular line.  Throughout the rest of the section we will provide further justification for our definition by looking at the classical Beltrami definition of conformality, and an earlier paper by Payne \cite{MR2230585}.   
\\ \indent Let $M$ be a $C^{\infty}$ Riemannian manifold and $g$ be a homeomorphism from $M$ to $M$.  Recall $g$ is conformal if the pullback of the Riemannian metric by $g$ is equal to the metric multiplied by some positive function.  Since we assume $M$ is $C^{\infty}$, we also have $g$ is infinitely differentiable \cite{MR0442846}.  The length element for our metric on $G-\{u=0\}$ is
$$ du^2+\frac{dv^2}{(r'(u))^2} $$
and its pullback by a function $g:G\to G$
is 
$$ \left[(U(g_1))^2+\frac{(U(g_2))^2}{(r'(g_1))^2}\right]du^2
+\frac{1}{(r'(u))^2}\left[(V(g_1))^2+\frac{(V(g_2))^2}{(r'(g_1))^2}\right]dv^2
+\frac{2}{r'(u)}\left[U(g_1)V(g_1)+\frac{U(g_2)V(g_2)}{(r'(g_1))^2}\right]dudv.$$
Recall $r'$ is zero only at zero so these expressions make sense on $G-\{u=0\}$ whenever $g_1$ is also non-zero on this domain.  
Thus we define conformality on the $r$-Grushin plane as follows:

\begin{definition} Suppose $A$ and $B$ are domains in $G$ and $g=(g_1,g_2): A \to B$ is a homeomorphism.  Define $A'=A-\{u=0 \text{ or } g_1(u)=0\}$.  We say $g|_{A'}$ is conformal provided that 

$$D_r g=\left( \begin{array} {cc}
U(g_1) & V(g_1) \\
\frac{U(g_2)}{r'(g_1)} & \frac{V(g_2)}{r'(g_1)}
\end{array} \right)$$
is defined and is a conformal matrix for every point in $A'$.  We say $g$ is conformal on all of $A$ if $g$ is conformal on $A'$ and for all points $w_0\in A-A'$, $$\lim_{w\to w_0} D_rg(w)$$ is defined and non-zero.  We take the limit along all paths in $A'$.
\end{definition}

 At first it may be tempting to think that the conjugation by $\Phi$ of any conformal map in the complex plane should be a conformal map in the $r$-Grushin plane.  This is not quite true.  There are mappings that are conformal everywhere on the complex plane, but when conjugated by $\Phi$ are only conformal on domains limited by the singular line.   For example, consider a horizontal translation $f(x+iy)=x+a+iy$.  When we conjugate $f$ with $\Phi(u,v)=\frac{1}{2}u|u|+iv$ we obtain the mapping
$$g(u,v)=\left(\sqrt{2}\frac{\frac{1}{2}u|u|+a}{\sqrt{|\frac{1}{2}u|u|+a|}},v\right).$$
Notice $V(g_1)=\frac{U(g_2)}{r'(g_1)}=0$ and $\frac{V(g_2)}{r'(g_1)}=U(g_1)=\frac{|u|}{\sqrt{|u|u|+2a|}},$
 and thus for the classical Grushin plane $D_rg$ is singular exactly on the line $u=0$, and the pre-image under $g$ of the line $u=0$.
 Therefore $g$ is only conformal on the Grushin plane on a domain excluding the singular line and the pre-image of the singular line.  We will discuss what must happen for a homeomorphism to be conformal on the entire Grushin plane after the next theorem.  
 
 The following result shows that for most domains in $G$ our description of conformality matches with the classical Beltrami differential definition of conformality.
\\
\begin{theorem}\label{AnalyticConformalDef} Suppose $A$ and $B$ are domains in $G$, and $g: A \to B$ is an orientation-preserving homeomorphism.  Then $g$ is conformal on the domain $A'=A-\{(u,v): u=0 \text{ or } g_1(u,v)=0\}$ if and only if $g$ is quasisymmetric and the Beltrami differential $\nu$ is identically zero on $A'$.
\end{theorem}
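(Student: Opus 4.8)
The plan is to reduce the whole equivalence to a single algebraic identity and then transport regularity back and forth across the quasisymmetry $\Phi$, using the results of the previous two sections. The starting point is to rewrite $\overline{W}\tilde{g}$ in terms of the entries of $D_r g$. Since $U=\partial_u$ and $V=r'(u)\partial_v$, the chain rule gives $U(r(g_1))=r'(g_1)U(g_1)$ and $V(r(g_1))=r'(g_1)V(g_1)$, so the formula for $\overline{W}\tilde{g}$ in the proof of Lemma \ref{BeltramiCoefs} becomes
$$\overline{W}\tilde{g}=\frac{1}{2}\big[\,r'(g_1)U(g_1)-V(g_2)+i\big(U(g_2)+r'(g_1)V(g_1)\big)\big].$$
Wherever the partials of $g$ exist and $g_1\neq 0$, the condition $\nu=\overline{W}\tilde{g}/W\tilde{g}=0$ is therefore equivalent to $U(g_1)=V(g_2)/r'(g_1)$ together with $V(g_1)=-U(g_2)/r'(g_1)$. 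These are exactly the conditions that $D_r g$ be an orientation-preserving conformal matrix; the hypothesis that $g$ is orientation preserving is precisely what selects this branch rather than the anticonformal one. This identity is the engine of the proof: at any point of $A'$ where $D_r g$ is defined, \emph{$D_r g$ is conformal if and only if $\nu=0$ there}.

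For the backward implication, assume $g$ is quasisymmetric with $\nu\equiv 0$ on $A'$. By Theorem \ref{QuasisymAnalyticDefn} the conjugate $f=\Phi\circ g\circ\Phi^{-1}$ is absolutely continuous on lines and satisfies the classical Beltrami equation of Definition \ref{CAnalyticDefn}; by Lemma \ref{BeltramiCoefs}, $\nu\equiv 0$ on $A'$ forces the classical coefficient $\mu$ to vanish on $\Phi(A')$, so $f$ is an absolutely continuous solution of $\partial f/\partial\bar z=0$ and hence holomorphic on $\Phi(A')$. A holomorphic map is $C^\infty$, and $\Phi$ is a smooth diffeomorphism off $\{u=0\}$ while $\Phi^{-1}$ is smooth off the imaginary axis; on $A'$ we have removed both $\{u=0\}$ and $\{g_1=0\}$, so $r(g_1)\neq 0$ and $g=\Phi^{-1}\circ f\circ\Phi$ is smooth throughout $A'$. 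Thus $D_r g$ is defined at every point of $A'$, and the algebraic identity (with $\nu=0$) shows it is conformal there; that is, $g$ is conformal on $A'$.

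For the forward implication, assume $g$ is conformal on $A'$. Then at every point of $A'$ the four partials exist and $D_r g$ is conformal, so the algebraic identity gives $\nu\equiv 0$ on $A'$ at once. It remains to prove $g$ is quasisymmetric. Since $g$ is a homeomorphism, $f$ is continuous with partials existing at every point of $\Phi(A')$ and satisfying the Cauchy--Riemann equations there, so the Looman--Menchoff theorem yields that $f$ is holomorphic on $\Phi(A')$. A holomorphic homeomorphism is conformal, hence $1$-quasiconformal and therefore quasisymmetric; conjugating by the quasisymmetries $\Phi$ and $\Phi^{-1}$ from Section 3 (and using that compositions and inverses of quasisymmetric maps are quasisymmetric) shows $g$ is quasisymmetric.

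The main obstacle lies in the forward direction, and it is the reason $A'$ rather than $A$ appears. First, upgrading the pointwise Cauchy--Riemann equations to genuine holomorphy is not automatic and requires a regularity theorem such as Looman--Menchoff (everywhere differentiability plus the Cauchy--Riemann equations, together with continuity). Second, $f$ is only shown to be holomorphic on $\Phi(A')$, that is, off the images of the singular line $\{u=0\}$ and of its preimage $\{g_1=0\}$; to conclude that $g$ is quasisymmetric on all of $A$ one must control $f$ across these exceptional arcs by a removable-singularity argument, and this is exactly the point at which the failure of $g$ to preserve the singular line produces the domain restrictions discussed immediately after the theorem. By contrast the backward direction is comparatively soft, the only subtlety being the passage from the almost everywhere Beltrami conclusion of Theorem \ref{QuasisymAnalyticDefn} to conformality at every point, which is supplied for free by the smoothness of holomorphic maps.
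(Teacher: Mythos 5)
Your proof is correct and follows essentially the same route as the paper: both reduce the statement to the algebraic equivalence between $D_rg$ being an orientation-preserving conformal matrix and $\overline{W}\tilde g=0$ (via Lemma \ref{BeltramiCoefs}), and then transfer conformality of $f$ back and forth through the quasisymmetry $\Phi$. The only real divergence is in how regularity and non-singularity are handled: where you invoke Looman--Menchoff in the forward direction and smoothness of holomorphic maps in the backward direction, the paper instead compares the Jacobians $J(D_rg)$ and $J(Df)$ explicitly to transfer non-degeneracy; your treatment is if anything more careful about the everywhere-versus-almost-everywhere issues that the paper's proof glosses over.
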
 

\begin{proof}
Suppose $g$ is conformal.  Then all the derivatives in the entries of $D_rg$ must exist.  Thus since $g$ is orientation-preserving and $D_rg$ is conformal, we must have
 \begin{enumerate}
\item $U(r(g_1))=V(g_2)$ and $V(r(g_1))=-U(g_2)$, and
\item $D_r g$ is non-singular.
\end{enumerate}
Then by condition (1), $\overline{W} \tilde{g}=0$ which implies $\nu=0$ and by Lemma \ref{BeltramiCoefs}, $\mu=0$.  So we have $\frac{\partial f_1}{\partial x}=\frac{\partial f_2}{\partial y}$ and $\frac{\partial f_1}{\partial y}=-\frac{\partial f_2}{\partial x}$.  Furthermore
$$ J(D_rg)=\frac{1}{r'(g_1)}(U(g_1)V(g_2)-U(g_2)V(g_1)) \text{ and } 
 J(Df)=\frac{r'(g_1)}{(r'(u))^2}(U(g_1)V(g_2)-U(g_2)V(g_1)) |_{\Phi^{-1}(x,y)}.$$
 Thus since $r'$ takes the value zero only at $u=0$, $J(D_rg)$ and $J(Df)\circ\Phi$ are zero for the exact same values on $A'$.  So $Df$ is conformal almost everywhere and therefore $f$ is conformal and hence quasisymmetric.  Finally, since compositions of quasisymmetric maps are quasisymmetric, $g$ is quasisymmetric.
\\     \indent Now we assume $g$ is quasisymmetric and $\nu$ is identically zero on $A'$.  Since $\nu$ is identically zero on $A'$, we must have $\overline{W} \tilde{g}=0$, which implies condition (1).  Also since $g$ is quasisymmetric, $f$ is quasisymmetric and hence quasiconformal.  By Lemma \ref{BeltramiCoefs}, $\mu=0$ and hence $f$ is conformal.  Thus we can conclude that condition (2) also holds by our earlier statement regarding the Jacobians of $D_rg$ and $Df$, and therefore $g$ is conformal.
 \end{proof}

The situation is more complicated if we include the singular line and its pre-image in our domain.  For $g$ to be conformal in such a domain, 
$$J(D_rg)=\frac{r'(u)}{r'(g_1)}\left(\frac{\partial g_1}{\partial u}\frac{\partial g_2}{\partial v}-\frac{\partial g_2}{\partial u}\frac{\partial g_1}{\partial v}\right)$$
 must be non-singular.  Since we assume $g$ is orientation-preserving, this occurs exactly when 
 $$\lim_{u\to 0} \frac{r'(u)}{r'(g_1)}$$ is finite and non-zero.  Thus the singular line must map to itself.  
\\  
\indent This theorem also justifies our earlier work and in particular our selection of a relationship between the quasisymmetry $\Phi$ and the vector fields on $G$.  With other choices we do not have that $\overline{W}\tilde{g}=0$ when $g$ is conformal.  For example, if we use Meyerson's quasisymmetry 
$$ \Phi_M(u,v)=r_M(u)+iv=u|u|+iv$$
for the classical Grushin plane with vector fields $U=\frac{\partial}{\partial u}$ and $V=|u|\frac{\partial}{\partial v}$ we do not have that $|u|$ is equal $r_M'(u)$.  We compute 
$$\overline{W}\tilde{g}=\frac{1}{2}(U(g_1|g_1|)-V(g_2)+i(U(g_2)+V(g_1|g_1|))).$$
\indent Also we can use the same method as described at the beginning of this section to say, since $g$ is a homeomorphism on the classical Grushin plane, $g$ is conformal exactly when 
 $$\left( \begin{array} {cc}
 U(g_1) & V(g_1) \\
 \frac{U(g_2)}{|g_1|} & \frac{V(g_2)}{|g_1|}
 \end{array} \right)$$
 is a conformal matrix.  To simplify matters for the moment, we assume our domain does not include points on the singular line or points that map to the singular line.
Thus if $g$ is conformal, we must have $|g_1|U(g_1)=V(g_2)$ and $|g_1|V(g_1)=-U(g_2)$ which implies
$U(g_1|g_1|)=2V(g_2)$ and $V(g_1|g_1|)=-2U(g_2)$.  Hence we are not guaranteed that $\overline{W}\tilde{g}=0$ for conformal mappings.
\\ \indent To the best of the author's knowledge the only earlier discussion of conformal mappings on the Grushin plane is in a paper by Payne \cite{MR2230585}.  He defines a sequence of flows and states that the time-$s$ maps induced by the  solutions to any of the flows are conformal maps on the Grushin plane.  Here we will look at a generalization of Payne's flows and show that their solutions induce conformal maps on the $r$-Grushin plane.  In the following calculations $x$ and $y$ will be formal variables and $u$ and $v$ will be the Grushin coordinates as before.  First we define a sequence of functions of $x$ and $y$, $(\xi_k(x,y),\eta_k(x,y))$, $k\in\mathbb{N}$ by $(\xi_1,\eta_1)=(0,1)$, 
$$(\xi_2,\eta_2)=\left(\frac{r(x)}{r'(x)},y\right),$$

and the functions given inductively by
$$ (\xi_{k},\eta_{k})=(2\xi_{k-1}\eta_{k-1},\eta_{k-1}^2-(r'(x)\xi_{k-1})^2) \text{ for } k\geq 3.$$
The flows we will be solving are the autonomous differential equations:
$$\left(\frac{\partial{x}_k}{\partial s},\frac{\partial y_k}{\partial s}\right)=(\xi_k(x_k,y_k), \eta_k(x_k,y_k))$$
where $x_k=x_k(s,u,v)$ and $y_k=y_k(s,u,v)$ are functions of $u$, $v$ and a time parameter $s$.  We will let $g_k$ denote $(x_k,y_k)$,  In other words $g_k=(x_k,y_k):[0,\infty)\text{ x } G \to G$.  When $r=\frac{1}{2}u|u|$, these flows agree with Payne's flows up to a normalization.
We will show that each time-$s$ map associated with a solution with initial condition $x_k(0,u,v)=u$ and $y_k(0,u,v)=v$, is a conformal map on some domain in the $r$-Grushin plane.

One can easily compute the solutions to the first two flows
$g_1=(u,v+s)$, and
$g_2=(r^{-1}(r(u)e^s),ve^s),$
and check that the time-$s$ maps satisfy our definition of conformality.  The first solution gives vertical shifts by $s$.  In the classical Grushin plane ($r=\frac{1}{2}u|u|$) the second solution gives dilations by a factor of $e^{s/2}$.

To solve the remaining equations we will use the following auxiliary functions:
$$\Phi(x,y)=r(x)+iy \text{ and } b_k(x,y)=r'(x)\xi_k(x,y)+i\eta_k(x,y).$$
Recall $x$ and $y$ are formal variables.  We are interested in $b_k$ because 
\begin{equation}\label{difequation}
 b_k\circ g_k=\frac{\partial}{\partial s}(\Phi\circ g_k).
 \end{equation}
We will then find a non-iterative way of expressing $b_k(x,y)$ for each $k$ value and finally integrate $b_k\circ g_k$ to solve for $\Phi\circ g_k$.  We choose to solve for $\Phi\circ g_k$ instead of solving for $g_k$ directly, because this is a far easier task as will be evident when the reader sees the solutions in a moment.
One can compute $$b_k(x,y)=-i(b_{k-1}(x,y))^2 \text{ for } k\geq 4$$
and $$b_3(x,y)=-i\Phi(x,y)^2$$
by applying the definitions of $b_k$, $\xi_k$ and $\eta_k$.
Thus by induction we obtain
$$b_k(x,y)=i(-i\Phi(x,y))^{\alpha} \text{ where } \alpha =2^{k-2}.$$
Then by equation \ref{difequation} we have the following differential equations
$$\frac{\partial}{\partial s}(\Phi\circ g_k)=i(-i(\Phi\circ g_k))^{\alpha}.$$
Recall our initial condition on $g_k=(x_k,y_k)$ was $x_k(0,u,v)=u$ and $y_k(0,u,v)=v$.  So our initial condition is now $b(x_k(0,u,v),y_k(0,u,v))=r(u)+iv$.
Finally we obtain the solutions 
$$\Phi\circ g_k(s,u,v)=\frac{r(u)+iv}{([1-\alpha][-i(r(u)+iv)]^{\alpha-1}s+1)^{\frac{1}{\alpha-1}}}.$$

Let $g_k^s:G\to G$ denote the map $g_k$ for some fixed time $s$.  We will show for all $k\in\mathbb{N}$ and all $s\in [0,\infty)$, $g_k^s$ is conformal on some domain in the $r$-Grushin plane.  For $k\in\{1,2\}$, $g_k^s$ is conformal on the entire plane.  For $k\geq 4$, $g_k^s$ is conformal on some domain limited by a branch cut.  For example when $k=4$ if we specify that $-3i(r(u)+iv)^3s+1\in\mathbb{C}-\{(u,v): u\leq 0, v=0\}$ then we find that $g_4^s$ is conformal on the domain $$G-\left\{\Phi^{-1}(z): \arg(z)\in\left\{\frac{\pi}{2},\frac{7\pi}{6},\frac{11\pi}{6}\right\}, |z|>\left(\frac{1}{3s}\right)^{1/3}\right\}.$$
In general $g_k^s$ will be conformal on a domain with $\alpha-1$ cuts when $k\geq 4$.  We will discuss the case of $k=3$ after we prove conformality.

To prove each $g_k^s$ for $k\geq 3$ is conformal we look at the function
$$f_k^s=\Phi\circ g_k^s \circ\Phi^{-1}(z)=\frac{z}{([1-\alpha][-iz]^{\alpha-1}s+1)^{\frac{1}{\alpha-1}}}.$$
Thus $$\frac{\partial f_k^s}{\partial \bar{z}}=0,$$
and hence $f_k^s$ is conformal.  Then by Lemma \ref{BeltramiCoefs}, Theorem \ref{QuasisymAnalyticDefn} and Theorem \ref{AnalyticConformalDef}, $g_k^s$ is conformal.
\\ \indent	Earlier we noted that in the classical Grushin plane $g_1^s$ and $g_2^s$ were the familiar Grushin translations and dilations.  Now we can see that $g_3^s$ comes from a composition of translations, dilations and an inversion.  We have
  
 $$ f_3^s=\Phi\circ g_3^s \circ\Phi^{-1}= \frac{z}{1+izs}=\overline{\lambda^{-1}}\circ\ I_E \circ \lambda$$	
  where $\lambda (z)=zs-i$, $\Phi(u,v)=\frac{1}{2}u|u|+iv$ and $I_E$ is the Euclidean inversion $z\to 1/\bar{z}$.

The family of maps generated by $f_3$ is not entirely satisfactory since as $s$ goes to infinity $f_3^s$ degenerates to the zero map.  The slightly different family of maps $f_3^{*}(z)=\frac{is+z}{1+isz}$, goes to an inversion map as $s$ goes to infinity which is the behavior we would expect.

\indent A natural question to ask at this point is whether Payne's family of conformal maps includes all conformal maps on the Grushin plane or in some way generates all conformal maps on the Grushin plane.  If this is not true, are there functions we could add to Payne's list to enable us to obtain all conformal maps?  One way to approach these questions would be to try to find a more complete version of Theorem \ref{AnalyticConformalDef}.  In other words try to determine under exactly which conditions the theorem holds when we allow our domain to include the singular line.  This would give a complete characterization of conformal mappings on the Grushin plane which then could be compared to Payne's class of maps.

\bibliographystyle{plain}
 \bibliography{ColleenAckermannsReferences}

}
\end{document}